\newtheorem{theorem}{Theorem}[section]
\newtheorem{lemma}[theorem]{Lemma}
\newtheorem{assumption}[theorem]{Assumption}
\newtheorem{example}{Example}
\newtheorem{remark}{Remark}
\DeclareMathOperator{\trace}{trace}
\DeclareMathOperator{\D}{\Delta}
\journal{Journal of \LaTeX\ Templates}
\begin{document}

\begin{frontmatter}

\title{The rate of $L^p$-convergence for the Euler-Maruyama method of the stochastic differential equations with Markovian switching \tnoteref{mytitlenote}}
\tnotetext[mytitlenote]{This research was supported by NSFC (Grant No.12071101 and No.11671113).}

%% Group authors per affiliation:
\author{Minghui Song\corref{cor1}}
\ead{songmh@hit.edu.cn}
\author{Yuhang Zhang}
\ead{19b912028@stu.hit.edu.cn}
\author{Mingzhu Liu}
\ead{mzliu@hit.edu.cn}
\address{School of Mathematics, Harbin Institute of Technology, Harbin, 150001, China}
\cortext[cor1]{Corresponding author}

\begin{abstract}

This work deals with the Euler-Maruyama (EM) scheme for stochastic differential equations with Markovian switching (SDEwMSs). We focus on the $L^p$-convergence rate  $(p\ge 2)$ of the EM method given in this paper. As far as we know, the skeleton process of the Markov chain is used in the continuous numerical methods in most papers. By contrast, the continuous EM method in this paper is to use the Markov chain directly. To the best of our knowledge, there are only two papers that consider the rate of $L^p$-convergence, which is no more than $1/p~(p \ge 2)$ in these papers. The contribution of this paper is that the rate of $L^p$-convergence of the EM method can reach $1/2$. We believe that the technique used in this paper to construct the EM method can also be used to construct other methods for SDEwMSs. 
\end{abstract}

\begin{keyword}
stochastic differential equations \sep Markov chain\sep Euler-Maruyama method \sep $L^p$-convergence \sep convergence rate
\MSC[2010] 65C30\sep  60H35
\end{keyword}

\end{frontmatter}

\section{Introduction}\label{sec1}

%为什么研究这类方程
Stochastic differential equations with Markovian switching (SDEwMSs), also known as hybrid stochastic differential equations, play an important role in stochastic theory and have been used in various fields, such as the theory of control and neural networks (\cite{2006XMao,1994Birkhauser,1990Mariton}).  
%数值格式的相关研究
Most of SDEwMSs do not have explicit solutions so it is important to have numerical solutions (\cite{YUAN2004223,NGUYEN20121170,doi:10.1137/16M1084730,NGUYEN201814,KUMAR2020112917,GAO2021111224,MAO2007936,BAO20091379,doi:10.1137/080727191,ShaoboZhou2015Calcolo,OBRADOVIC2019664,DENG201915}).  \cite{YUAN2004223} is the first research that developed the Euler-Maruyama (EM) scheme for SDEwMSs with the global Lipschitz continuous coefficients and considered the $L^2$-convergence rate for EM solutions.  \cite{doi:10.1137/16M1084730} designed approximation methods of Milstein type for SDEwMSs and proved the convergence rate is better than the generally adopted EM procedures.

The primary motivation for this work came from the following observation: to our knowledge, there are 
plenty of papers on the convergence of numerical algorithms for hybrid systems, most of them showed the convergence (without order)(e.g., \cite{MAO2007936,BAO20091379,doi:10.1137/080727191,ShaoboZhou2015Calcolo,OBRADOVIC2019664,DENG201915,doi:10.1080/10236190802695456}) or the rate of convergence in the sense of pathwise or mean square (e.g., \cite{YUAN2004223,NGUYEN20121170,doi:10.1137/16M1084730,NGUYEN201814,KUMAR2020112917,GAO2021111224,HOANG2014822,Congyuhao2019IJCM}). However, there are only a few papers revealed the $L^p$-convergence order of  numerical methods for hybrid systems (\cite{2019HouZhenting, 2020ZhangWei}). Not only that, the order of $L^p$-convergence for Euler type numerical algorithms proved in these papers are no more than $1/p~(p\ge 2)$, instead of the well known $1/2$. To be specific, the main result in \cite{2019HouZhenting}  (Theorem 3) shows
  \begin{equation}\label{e: remark 1}
  \mathbb{E}\sup_{t\in[0,T]}|y(t)-x(t)|^p\le C_5\D(1+\mathbb{E}|x_0|^p),
  \end{equation}
where $x(t)$ is the exact solution of the stochastic delay differential equation with phase semi-Markovian switching and Poisson jumps, $y(t)$ denotes the numerical approximation using the continuous $\theta$ method, $\D$ is the given step-size, $C_5$ denotes a generic constant that independent of  $\D$, $x_0$ is the initial data. By analyzing the details in this paper, we find that the problem first appears in the estimations of 
  \begin{equation*}
    \mathbb{E}\int_0^T|f(Z_1(s),Z_1(s-\tau),{\color{red} r_1(s)})-f(Z_1(s),Z_1(s-\tau),{\color{red} r(s)})|^p{\rm d}s,
 \end{equation*}
and 
  \begin{equation*}
    \mathbb{E}\int_0^T|f(Z_2(s),Z_2(s-\tau),{\color{red} r_2(s)})-f(Z_2(s),Z_2(s-\tau),{\color{red} r(s)})|^p{\rm d}s
 \end{equation*}
(Lemma 4 in \cite{2019HouZhenting}), we think these two terms can be seen as the errors in approximating $r(s)$ by $r_1(s)$ and $r_2(s)$, where $r(s)$ is the given continuous-time Markov chain. Similar estimations also exist in many works aforementioned, for example, Eq.(3.7) in \cite{YUAN2004223}, Lemma 3 in \cite{MAO2007936}, Eq.(3.6) in \cite{BAO20091379}, as well as Corollary 3.1 in \cite{2020ZhangWei}, etc. Based on this fact, the main idea of this work is to use $r(s)$ itself to construct a numerical scheme, rather than its approximation. Therefore, the innovations of this paper are as follows:
\begin{itemize}
\item We will use the continuous-time Markov chain itself to develop the numerical scheme, instead of its approximation.
\item The order of $L^p$-convergence for the EM method given in this work to SDEwMSs can reach $1/2$.
\end{itemize}

The rest of the paper is arranged as follows. In Section \ref{Notations}, we present some notations and fundamental assumptions, moreover, we further introduce the classical EM method for SDEwMSs which is often used in literatures. Then we develop a different EM scheme in Section \ref{Euler method}. The rate of $L^p$-convergence for the EM method be proved in Section \ref{Rate of strong convergence}.  Finally, we give the conclusion of this paper in Section \ref{Conclusions}.

%%%%%%%%%%%%%%%%%%%%%%%%%%%%%%%%%%%%%%%%%%%%%%%%%%%%%%%%%%%%%
\section{Notations and preliminaries}\label{Notations}

In the rest of this work, except as otherwise noted, we let $(\Omega ,\mathcal{F},\left\{\mathcal{F}_t\right\}_{t\ge 0},\mathbb{P})$ be a complete probability space with a filtration $\left\{\mathcal{F}_t\right\}_{t\ge 0}$ which satisfies the general conditions (namely, it is right continuous and $\mathcal{F}_0$ involves all $\mathbb{P}$-null sets). The transpose of $A$ is denoted by $A^{\rm T}$ when $A$ is a vector or matrix. $B(t)=(B_1(t),\dots,B_d(t))^{\rm T}$ represents a $d$-dimensional Brownian motion defined on the $(\Omega ,\mathcal{F},\left\{\mathcal{F}_t\right\}_{t\ge 0},\mathbb{P})$. If $x$ is a vector, $\vert  x\vert$ denotes its Euclidean norm. $\vert A\vert =\sqrt{\trace(A^{\rm T}A)}$ denotes the trace norm of a matrix $A$. If $u$ and $v$ are two real numbers, let $u\vee v$ and $u\wedge v$ be $\max\left\{u,v\right\}$ and $\min\left\{u,v\right\}$, respectively. %Let $\mathcal{L}^2_{\mathcal{F}_t}(\Omega;\mathbb{R}^n)$ denotes the family of $\mathcal{F}_t$-measurable $\mathbb{R}^n$-valued random variables $\xi$ with $\mathbb{E}\|\xi\|^2<\infty$. 
Let $\mathcal{L}^p([a,b];\mathbb{R}^n)$ be the family of $\mathbb{R}^n$-valued processes $\{f(t)\}_{a\le t\le b}$ which satisfies  $\mathcal{F}_t$-adapted and $\int_a^{b}\vert f(t)\vert^p {\rm{d}}t<\infty$, a.s. $\mathcal{L}^p(\mathbb{R}_+;\mathbb{R}^n)$ denotes the family of processes $\{f(t)\}_{t\ge 0}$ such that $\{f(t)\}_{0\le t\le T}\in \mathcal{L}^p([0,T];\mathbb{R}^n)$ for any $T>0$.
%$\mathcal{M}^p([a,b];\mathbb{R}^n)$ denotes the family of processes $\{f(t)\}_{a\le t\le b}$ in $\mathcal{L}^p([a,b];\mathbb{R}^n)$ such that $\mathbb{E}\int_a^{b}\|f(t)\|^p {\rm{d}}t<\infty$. $[\cdot]$ denotes the greatest integer function.
In this paper, we use $C$ represents a common positive number independent of $\Delta$, its value may vary with each appearance. 
 
Suppose $\alpha(t),t\ge 0,$ is a right-continuous Markov chain taking values in $S=\left\{1,2,\dots ,N\right\}$ with generator $\Gamma =(\gamma_{ij})_{N\times N}$, $\gamma_{ij}\ge 0$ denotes the transition rate from $i$ to $j$ when $i\neq j$, and
\begin{equation*}
\gamma_{ii}=-\sum_{j\neq i}\gamma_{ij}.
\end{equation*}
Assuming that $\alpha(\cdot)$ is independent of the Brownian motion $B(\cdot )$. 

Let $T>0$, consider the following SDEwMS
\begin{equation}\label{HSDE}
\begin{cases}
       {\rm{d}}z(t)=f(z(t),\alpha(t)){\rm{d}}t+g(z(t),\alpha(t)){\rm{d}}B(t), \\
       z(0)=z_0\in\mathbb{R}^n, \alpha(0)=i_0\in S,
\end{cases}
\end{equation}
 on $t\in [0,T]$, where
\begin{equation*}
f:\mathbb{R}^n\times S\to \mathbb{R}^n\quad {\rm{and}} \quad g:\mathbb{R}^n\times S\to \mathbb{R}^{n\times d}.
\end{equation*}
 We impose the following conditions:
%%%%%%%%%%%%%%%%%%%%%%%%%%%%%%%%%%%%%%%%%%%%%%%%%%%%%%%%%%%%%

\begin{assumption}\label{The global Lipschitz condition}
There is a number $L>0$ such that 
\begin{equation*}
\vert f(z,i)-f(\bar{z},i)\vert \vee\vert g(z,i)-g(\bar{z},i)\vert \le L\vert z-\bar{z}\vert,
\end{equation*} 
for all $i\in S$ and $z,\bar{z}\in\mathbb{R}^n$.
\end{assumption}

\begin{assumption}\label{initial condition}
There is a number $K>0$ such that 
\begin{equation*}
\vert f(0,i)\vert \vee\vert g(0,i)\vert \le K,~~ \forall i\in S.
\end{equation*} 
\end{assumption}

\begin{remark}\label{linear growth}
By Assumptions \ref{The global Lipschitz condition} and \ref{initial condition}, we can easily arrive at
\begin{equation*}
\vert f(z,i)\vert \vee\vert g(z,i)\vert \le (K+L)(1+\vert z\vert),
\end{equation*} 
for all $i\in S$ and $z\in\mathbb{R}^n$.
\end{remark}
%%%%%%%%%%%%%%%%%%%%%%%%%%%%%%%%%%%%%%%%%%%%

\begin{lemma}[Lemma 2.2 in \cite{YUAN2004223}]\label{theorem_existence and uniqueness}
If Assumptions \ref{The global Lipschitz condition} and \ref{initial condition} hold, then for arbitrary $p\ge 2$, there is a unique solution for Eq.\eqref{HSDE} with the initial data $z_0$. In addition, the solution satisfies \begin{equation*}
\begin{split}
\mathbb{E}\left(\sup_{0\le t\le T}\vert z(t)\vert^p\right)\le C.
\end{split}
\end{equation*}
\end{lemma}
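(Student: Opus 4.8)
The plan is to split the argument into two parts, existence-and-uniqueness and the $p$-th moment bound, in both cases reducing the switching problem to classical It\^o theory by exploiting that $\alpha(\cdot)$ is a finite-state chain independent of $B(\cdot)$.

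First I would handle existence and uniqueness by freezing the chain between its jumps. Since $\alpha(\cdot)$ takes values in the finite set $S$ with generator $\Gamma$, almost every sample path is a right-continuous step function having only finitely many jumps $0=\tau_0<\tau_1<\cdots<\tau_m\le T$ on $[0,T]$. On each interval $[\tau_k,\tau_{k+1})$ the chain is constant, say $\alpha(\cdot)\equiv i$, so Eq.~\eqref{HSDE} reduces to the ordinary It\^o equation with coefficients $f(\cdot,i)$ and $g(\cdot,i)$, which are globally Lipschitz by Assumption~\ref{The global Lipschitz condition} and of linear growth by Remark~\ref{linear growth}. Classical Picard iteration then yields a unique strong solution on $[\tau_k,\tau_{k+1})$ started from the endpoint value inherited from the previous interval, and concatenating over the finitely many intervals produces the unique solution on $[0,T]$. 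The crucial point is that the constants $L$ and $K+L$ are uniform in $i\in S$, so the switching contributes no extra dependence.

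For the moment bound I would argue in integral form and close the estimate with Gronwall's inequality, localising first so that the argument is not circular. Introduce the stopping time $\tau_R=\inf\{t\ge 0:|z(t)|\ge R\}$. Writing the solution in integral form up to $t\wedge\tau_R$, raising to the $p$-th power, taking the supremum over $u\in[0,t]$ of $|z(u\wedge\tau_R)|^p$ and then expectations, I would bound the drift term by H\"older's inequality and the stochastic term by the Burkholder--Davis--Gundy inequality, and then convert $|f|^p$ and $|g|^p$ into $C(1+|z|^p)$ via Remark~\ref{linear growth}. This gives
\begin{equation*}
\mathbb{E}\sup_{0\le u\le t}|z(u\wedge\tau_R)|^p\le C\left(1+|z_0|^p\right)+C\int_0^t\mathbb{E}\sup_{0\le r\le s}|z(r\wedge\tau_R)|^p\,{\rm d}s.
\end{equation*}

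Gronwall's inequality then yields a bound uniform in $R$, and letting $R\to\infty$ (so that $\tau_R\to\infty$ almost surely) together with Fatou's lemma delivers $\mathbb{E}\sup_{0\le t\le T}|z(t)|^p\le C(1+|z_0|^p){\rm e}^{CT}=C$. The main obstacle I anticipate is bookkeeping rather than conceptual: one must check that localisation renders every quantity finite before invoking Gronwall, and must keep all constants independent of $R$ and of the switching state so that the final bound survives the limit. Once the uniform-in-$i$ linear growth is in hand, the switching causes no essential difficulty beyond the standard It\^o estimate.
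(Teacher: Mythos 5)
Your proposal is correct, but note that the paper itself gives no proof of this statement: it is quoted verbatim as Lemma 2.2 of \cite{YUAN2004223}, and the proof in that reference proceeds exactly as you do --- existence and uniqueness by solving the frozen-coefficient It\^o equation between the finitely many jump times of $\alpha(\cdot)$ and concatenating (the standard interlacing argument, valid because $L$ and $K$ are uniform over $i\in S$), and the $p$-th moment bound by localisation, H\"older and Burkholder--Davis--Gundy estimates, the linear growth bound of Remark \ref{linear growth}, and Gronwall's inequality followed by Fatou's lemma. So your blind reconstruction coincides with the standard argument behind the cited result, and the points you flag (finiteness of all quantities under localisation, constants independent of $R$ and of the switching state) are precisely the bookkeeping that argument requires.
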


%%%%%%%%%%%%%%%%%%%%%%%%%%%%%%%%%%%%%%%%%%%%%%%%%%%%%%%%%%%%%

In the following, we will first introduce the classical methods for simulating discrete-time Markov chain that have been used in many papers, and further present the well known EM method.  In the next section, we will introduce the method used to simulate Markov chain in this paper, and further construct another type of EM method for SDEwMS which is different from the one given in the Ref.\cite{YUAN2004223}.%Then we will point out that numerical methods constructed in this way will have some limitations when considering the order of $L^p$-convergence.

\subsection{The classical EM method}
The most commonly used method to generate the discrete Markov chain $\left\{\alpha_k^{\Delta},k=0,1,2,\dots\right\}$ is based on the properties of embedded discrete Markov chain:
For any given step size $\Delta>0$, let $\alpha_k^{\Delta}=\alpha(k\Delta)$ for $k\ge 0$. Then $\left\{\alpha_k^{\Delta} \right\}$ is a discrete Markov chain  with the one-step transition probability matrix 
\begin{equation*}
\mathbb{P}(\Delta)=(\mathbb{P}_{ij}(\Delta))_{N\times N}=e^{\Gamma \Delta}.
\end{equation*}
Hence, the discrete Markov chain $\left\{\alpha_k^{\Delta},k=0,1,2,\dots \right\}$ can be generated as follows: Let $\alpha_0^{\Delta}=i_0$ and compute a pseudo-random number $\zeta_1$ from the uniform $[0,1]$ distribution. Define 
\begin{equation*}
\alpha_1^{\Delta}=\begin{cases}i_1,\,\,&{\rm{if\,\,}} i_1\in S-\{N\} {\rm{\,\,such\,\, that\,\,}} \sum_{j=1}^{i_1-1}\mathbb{P}_{i_0,j}(\Delta)\le \zeta_1<\sum_{j=1}^{i_1}\mathbb{P}_{i_0,j}(\Delta),\\
N,\,\,&{\rm{if\,\,}} \sum_{j=1}^{N-1}\mathbb{P}_{i_0,j}(\Delta)\le \zeta_1,
\end{cases}
\end{equation*}
where we set $\sum_{j=1}^{0}\mathbb{P}_{i_0,j}(\Delta)=0$ as usual. Generally, having calculated $\alpha_0^{\Delta},\alpha_1^{\Delta},\dots,\alpha_k^{\Delta},$ we compute $\alpha_{k+1}^{\Delta}$ by drawing a uniform $[0,1]$ pseudo-random number $\zeta_{k+1}$ and setting 
\begin{equation*}
\alpha_{k+1}^{\Delta}=\begin{cases}i_{k+1},\,\,&{\rm{if\,\,}} i_{k+1}\in S-\{N\} {\rm{\,\,such\,\, that\,\,}} \\
&\quad\sum_{j=1}^{i_{k+1}-1}\mathbb{P}_{r_k^{\Delta},j}(\Delta)\le \zeta_{k+1}<\sum_{j=1}^{i_{k+1}}\mathbb{P}_{r_k^{\Delta},j}(\Delta),\\
N,\,\,&{\rm{if\,\,}} \sum_{j=1}^{N-1}\mathbb{P}_{\alpha_k^{\Delta},j}(\Delta)\le \zeta_{k+1}.
\end{cases}
\end{equation*}
After explaining how to get the Markov chain $\left\{\alpha_k^{\Delta},k=0,1,2,\dots \right\}$, we can now give the classical EM method for the SDEwMS \eqref{HSDE}. Given a step size $\Delta>0$, let $t_k=k\Delta$ for $k\ge 0$, setting $X_0=z_0,\alpha_0^{\D}=i_0$ and forming
\begin{equation}\label{classical EM}
X_{k+1}=X_k+f(X_k,\alpha_k^{\D})\D+g(X_k,\alpha_k^{\D})\D B_k,
\end{equation}
where $\D B_k=B(t_{k+1})-B(t_k)$, $X_k$ is the approximation of $z(t_k)$. Let 
\begin{equation*}
\bar{X}(t)=X_k,\quad \bar{\alpha}(t)=\alpha_k^{\D} \quad {\rm for}\quad t\in [t_k,t_{k+1}),
\end{equation*}
 and the continuous EM method is defined by
 \begin{equation}\label{classical continuous EM}
 X(t)=X_0+\int_0^t f(\bar{X}(s),\bar{\alpha}(s)){\rm d}s+\int_0^t g(\bar{X}(s),\bar{\alpha}(s)){\rm d}B(s).
  \end{equation}
  It can be verified that $X(t_k)=\bar{X}(t_k)=X_k$.
  %\begin{remark}
  %It has been shown that the $L^2$-convergence rate of the EM method \eqref{classical continuous EM} for SDEwMS \eqref{HSDE} is $1/2$ (\cite{YUAN2004223}), i.e.,
 % \begin{equation*}
%  \mathbb{E}\left\{\sup_{0\le t\le T}|X(t)-z(t)|^2\right\}\le C\D+o(\D).
%  \end{equation*}
%  Similar conclusions on the $L^2$-convergence order of Euler type numerical methods can also be found in \cite{NGUYEN201814, Congyuhao2019IJCM}.
%    \end{remark}
    
     \begin{remark} 
As we said in the Section \ref{sec1}, there are only a few papers that estimates the error between the numerical approximation and the exact solution for hybrid systems in the sense of $p$-th moment. Inequality \eqref{e: remark 1} is equivalent to 
     \begin{equation*}
 \left( \mathbb{E}\sup_{t\in[0,T]}|y(t)-x(t)|^p\right)^{1/p}\le C\D^{1/p},
  \end{equation*}
  where $C=(C_5(1+\mathbb{E}|x_0|^p))^{1/p}$, this implies the $L^p$-convergence order for the $\theta$ method to the hybrid system is $1/p$, instead of $1/2$, which is the convergence order of $\theta$ method for stochastic systems without Markov chain (\cite{2015Zong}). Main result in \cite{2020ZhangWei} is similar to the Theorem 3 in \cite{2019HouZhenting}.  \end{remark}
  
  In the next section, we will give a different EM scheme using another method to formulate the Markov chain $\alpha(t)$, and we will prove that the EM method given in this paper will converge to Eq.\eqref{HSDE} in the sense of $L^p~(p\ge 2)$ with the order $1/2$.
  
\section{Euler-Maruyama method}\label{Euler method}
For the generation of the Markov chain $\alpha(t)$, we cite the methodology of  formulating the Markov chain from Section 2.4 in \cite{GeorgeYinbook}. In order to get the sample paths of $\alpha(t)$, we need to determine the time of residence in each state and the succeeding actions. The chain remains at any given state $i_0 (i_0\in S)$ for a random length of time, $\tau_1$, which follows an exponential distribution with parameter  $-\gamma_{i_0 i_0}$, hence $\tau_1$ can be obtained by
\begin{equation*}
\tau_1=\frac{\log(1-\zeta_1)}{\gamma_{i_0 i_0}},
\end{equation*}
where $\zeta_1$ is a random variable uniformly distributed in $(0,1)$. Then, the process will enter another state. In addition, the probability that state $j$ (with $j\in S, j\neq i_0$) becomes the next residence of the chain is $\gamma_{i_0 j}/(-\gamma_{i_0 i_0})$. The position after the jump is determined by a discrete random variable $i_1 (i_1\in S\setminus\{i_0\})$, namely $\alpha(\tau_1)=i_1$. The value of $i_1$ is given by 
\begin{equation*}
i_1=\begin{cases}
      1,&\text{if}~\xi_1< \gamma_{i_01}/(-\gamma_{i_0i_0}),\\
      2, &\text{if}~\gamma_{i_01}/(-\gamma_{i_0i_0})\le \xi_1< (\gamma_{i_01}+\gamma_{i_02})/(-\gamma_{i_0i_0}),\\
      \vdots &\vdots\\
      N, &\text{if}~\sum_{j\neq i_0, j\le N-1}\gamma_{i_0j}/(-\gamma_{i_0i_0})< \xi_1,
\end{cases}
\end{equation*}
where $\xi_1$ is a random variable uniformly distributed in $(0,1)$.

The chain remains at state $i_1$ for a random length of time, $\tau_2$ , which follows an exponential distribution with parameter  $-\gamma_{i_1 i_1}$, thus
\begin{equation*}
\tau_2=\frac{\log(1-\zeta_2)}{\gamma_{i_1 i_1}},
\end{equation*}
where $\zeta_2$ is also a random variable uniformly distributed in $(0,1)$. Then, the process will enter another state. The post-jump location is identified by a discrete random variable $i_2 (i_2\in S\setminus\{i_1\})$, which implies $\alpha(\tau_1+\tau_2)=i_2$. The value of $i_2$ is determined by 
\begin{equation*}
i_2=\begin{cases}
      1,&\text{if}~\xi_2< \gamma_{i_11}/(-\gamma_{i_1i_1}),\\
      2, &\text{if}~\gamma_{i_11}/(-\gamma_{i_1i_1})\le \xi_2< (\gamma_{i_11}+\gamma_{i_12})/(-\gamma_{i_1i_1}),\\
      \vdots &\vdots\\
      N, &\text{if}~\sum_{j\neq i_1, j\le N-1}\gamma_{i_1j}/(-\gamma_{i_1i_1})< \xi_2,
\end{cases}
\end{equation*}
where $\xi_2$ is a random variable uniformly distributed in $(0,1)$. Therefore, repeating the procedure above, the sampling path of $\alpha(t),t\ge 0$ is composed of exponential random variables and $U(0,1)$ random variables alternately.

Recall that nearly all sample paths of $\alpha(\cdot)$ are right-continuous piecewise constant function with finite sample jumps in $[0,T]$. Thus, there are stopping times $0=\bar{\tau}_0<\bar{\tau}_1<\bar{\tau}_2<\cdots<\bar{\tau}_{\bar{N}}=T~(\bar{N}\in \mathbb{N}_+)$, where $\bar{\tau}_k=\sum_{j=1}^{k}\tau_j, k=1,2,\dots,\bar{N}-1$, such that 
\begin{equation*}
\alpha(t)=\sum_{k=0}^{\bar{N}-1}i_kI_{[\bar{\tau}_k,\bar{\tau}_{k+1})}(t).
\end{equation*}
%%%%%%%%%%%%%%%%%%%%%%%%%%%%%%%%%%%%%%%%%%%%%%%%%%%%%%%%%%%%%

Now we are in a position to define the EM method to SDEwMS \eqref{HSDE}. Given a step size $\Delta>0$, let $t_k=k\Delta$ $(k\in\mathbb{N})$ be the gridpoints.

Define 
\begin{equation*}
J_i=\begin{cases}
      0,&\text{if}~i=0, \\
       \inf\{t\in(J_{i-1},T]\mid \alpha(t)\neq \alpha(t^-)\}\wedge \inf\{t\in(J_{i-1},T]\mid t=t_k, k\in \mathbb{N}\}, &\text{if}~i\ge 1.
\end{cases}
\end{equation*}

According to the definition of $J_i$, it is easy to know that 
\begin{equation*}
\tilde{N}:=\#\{J_i, i=0,1,\dots\}\le [T/\Delta]+\bar{N}+1,
\end{equation*}
where $\#\{J_i\}$ denotes the number of elements in set $\{J_i\}$. Then we define the EM method to \eqref{HSDE} of the following type by setting $Z_0=z(0)=z_0$,
\begin{equation}\label{EM}
\begin{split}
Z_{k+1}=Z_{k}+\sum_{i=0}^{\tilde{N}-1}f\left(Z_{k},\alpha(J_i)\right)I_{[t_{k},t_{k+1})}(J_i)\D J_i+\sum_{i=0}^{\tilde{N}-1}g\left(Z_{k},\alpha(J_i)\right)I_{[t_{k},t_{k+1})}(J_i)\Delta B_{J_i},
\end{split}
\end{equation}
for $k\in\mathbb{N}$, where $\D J_i=J_{i+1}-J_i, \Delta B_{J_i}=B(J_{i+1})-B(J_i)$. $Z_{k}$ is the approximate value of $z(t_{k})$. 
Let
\begin{equation*}
\bar{Z}(t)=\sum_{k=0}^{\infty}Z_{k}I_{[t_{k},t_{k+1})}(t),
\end{equation*}
the continuous EM method is given by
\begin{equation}\label{continuous EM}
Z(t)=Z_0+\int_{0}^t f(\bar{Z}(s),\alpha(s)){\rm d}s+\int_{0}^t g(\bar{Z}(s),\alpha(s)){\rm d}B(s).
\end{equation}
It can be verified that $Z(t_{k})=\bar{Z}(t_{k})=Z_{k},~\forall k\ge 0$.

%%%%%%%%%%%%%%%%%%%%%%%%%%%%%%%%%%%%%%%%%%%%%%%%%%%%%%%%%%
\rm
\section{Rate of the $L^p$-convergence for the EM method}\label{Rate of strong convergence}
Similar to Lemma 4.1 in \cite{Mao2006book}, we can easily obtain the following conclusion.

\begin{lemma}\label{bounded lemma}
Let Assumptions \ref{The global Lipschitz condition} and \ref{initial condition} hold. Then for any $\D\in(0,1]$ and $p\ge 2$, the EM method \eqref{continuous EM} has the following property
\begin{equation*}
\mathbb{E}\left(\sup_{0\le t\le T}\vert Z(t)\vert^p\right)\le C.
\end{equation*}
\end{lemma}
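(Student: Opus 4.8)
The plan is to follow the standard moment-bound argument for Euler--Maruyama schemes, the only new feature being that the Markov chain $\alpha(s)$ enters the coefficients directly; since the linear growth bound in Remark~\ref{linear growth} holds uniformly in $i\in S$, the switching causes no extra difficulty and can be carried along verbatim. To make the forthcoming Gronwall step rigorous I would first localize: fix $M>0$, set $\tau_M=\inf\{t\ge 0:|Z(t)|\ge M\}$ (with $\tau_M=\infty$ if this set is empty), and derive a bound on $\mathbb{E}\sup_{0\le u\le t\wedge\tau_M}|Z(u)|^p$ that is uniform in $M$; letting $M\to\infty$ and invoking Fatou's lemma then yields the claim. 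All estimates below are understood to be carried out for the stopped process and throughout one works with the continuous interpolation \eqref{continuous EM}.

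Starting from \eqref{continuous EM}, I would apply the elementary inequality $|a+b+c|^p\le 3^{p-1}(|a|^p+|b|^p+|c|^p)$ to split $|Z(u)|^p$ into the contributions of the initial datum $z_0$, the drift integral, and the stochastic integral. For the drift term Hölder's inequality gives
\begin{equation*}
\left|\int_0^u f(\bar{Z}(s),\alpha(s))\,{\rm d}s\right|^p\le T^{p-1}\int_0^u|f(\bar{Z}(s),\alpha(s))|^p\,{\rm d}s,
\end{equation*}
while for the stochastic integral the Burkholder--Davis--Gundy inequality followed by Hölder yields
\begin{equation*}
\mathbb{E}\sup_{0\le u\le t}\left|\int_0^u g(\bar{Z}(s),\alpha(s))\,{\rm d}B(s)\right|^p\le C\,T^{p/2-1}\,\mathbb{E}\int_0^t|g(\bar{Z}(s),\alpha(s))|^p\,{\rm d}s.
\end{equation*}

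Next I would invoke the linear growth estimate $|f(z,i)|\vee|g(z,i)|\le(K+L)(1+|z|)$ from Remark~\ref{linear growth}, valid for every $i\in S$, to dominate both $|f(\bar{Z}(s),\alpha(s))|^p$ and $|g(\bar{Z}(s),\alpha(s))|^p$ by $C(1+|\bar{Z}(s)|^p)$. Since $\bar{Z}(s)=Z_k=Z(t_k)$ for $s\in[t_k,t_{k+1})$, we have $|\bar{Z}(s)|\le\sup_{0\le r\le s}|Z(r)|$, so each integrand is bounded by $C\bigl(1+\sup_{0\le r\le s}|Z(r)|^p\bigr)$. Combining the three pieces gives, for the localized process,
\begin{equation*}
\mathbb{E}\sup_{0\le u\le t\wedge\tau_M}|Z(u)|^p\le C+C\int_0^t\mathbb{E}\sup_{0\le r\le s\wedge\tau_M}|Z(r)|^p\,{\rm d}s.
\end{equation*}

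The function $t\mapsto\mathbb{E}\sup_{0\le u\le t\wedge\tau_M}|Z(u)|^p$ is finite (bounded by $M^p$) and satisfies a linear integral inequality, so Gronwall's inequality produces a bound independent of both $t\in[0,T]$ and $M$; letting $M\to\infty$ and applying Fatou's lemma gives the assertion. As for difficulty, there is no serious obstacle here, which is exactly why the authors remark that the result follows as in Lemma~4.1 of \cite{Mao2006book}. The only point requiring genuine care is the a priori finiteness needed to legitimately apply Gronwall, which is precisely the role of the localizing stopping time $\tau_M$. I would also emphasize that the uniformity of the growth constant over the finite state space $S$ is what permits $\alpha(s)$ to appear unmodified, so that no term measuring the error in approximating the chain ever arises; this reflects the paper's central design choice of using $\alpha(s)$ itself in \eqref{continuous EM}.
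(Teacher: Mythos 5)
Your proposal is correct and is exactly the standard argument the paper appeals to: the paper omits the proof, citing Lemma 4.1 of \cite{Mao2006book}, and that proof is precisely your combination of the elementary power inequality, H\"older's inequality for the drift, the Burkholder--Davis--Gundy inequality for the stochastic integral, the uniform-in-$i$ linear growth bound of Remark \ref{linear growth}, the observation $\vert\bar{Z}(s)\vert\le\sup_{0\le r\le s}\vert Z(r)\vert$, and Gronwall's inequality. Your added localization via $\tau_M$ and the passage to the limit by Fatou's lemma is a legitimate (and careful) way to justify the finiteness needed for Gronwall, so no gap remains.
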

The proof is omitted because it is similar to that for Lemma 4.1 in \cite{Mao2006book}.

\begin{lemma}\label{X_X_D}
Suppose that Assumptions \ref{The global Lipschitz condition} and \ref{initial condition} hold. Then for any $p\ge 2$, 
\begin{equation*}%\label{X_X_D<CD^p/2}
\sup_{0\le t\le T}\mathbb{E}\vert Z(t)-\bar{Z}(t)\vert^p\le C\D^{p/2}.
\end{equation*}
\end{lemma}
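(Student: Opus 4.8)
The plan is to exploit the identity $Z(t_k)=\bar{Z}(t_k)=Z_k$ recorded just after \eqref{continuous EM}, so that the difference $Z(t)-\bar{Z}(t)$ reduces to a single increment of the continuous EM solution. Fix $t\in[0,T]$ and let $k$ be the index with $t\in[t_k,t_{k+1})$. Since $\bar{Z}(s)=Z_k=Z(t_k)$ is constant on $[t_k,t)$, formula \eqref{continuous EM} gives
\begin{equation*}
Z(t)-\bar{Z}(t)=Z(t)-Z(t_k)=\int_{t_k}^{t}f(\bar{Z}(s),\alpha(s))\,{\rm d}s+\int_{t_k}^{t}g(\bar{Z}(s),\alpha(s))\,{\rm d}B(s).
\end{equation*}
Applying $|a+b|^p\le 2^{p-1}(|a|^p+|b|^p)$, I would estimate the drift and diffusion contributions separately, the whole estimate being uniform in $k$ because $t-t_k\le\Delta$.

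For the drift term, I would use Hölder's inequality in the time variable to move the $p$-th power inside the integral at the cost of a factor $(t-t_k)^{p-1}\le\Delta^{p-1}$, then invoke the linear growth bound of Remark \ref{linear growth} together with the uniform moment estimate $\sup_{0\le s\le T}\mathbb{E}|\bar{Z}(s)|^p\le\mathbb{E}\left(\sup_{0\le s\le T}|Z(s)|^p\right)\le C$ furnished by Lemma \ref{bounded lemma} (note that $\bar{Z}(s)=Z(t_k)$ and that the bound holds for every value of $\alpha(s)$). This yields a drift bound of order $\Delta^{p-1}\cdot\Delta=\Delta^{p}$.

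For the diffusion term, the main tool is the Burkholder--Davis--Gundy inequality, which replaces the $p$-th moment of the stochastic integral by $\mathbb{E}\left(\int_{t_k}^{t}|g(\bar{Z}(s),\alpha(s))|^2\,{\rm d}s\right)^{p/2}$. A second Hölder step, legitimate since $p/2\ge 1$, produces a factor $(t-t_k)^{p/2-1}\le\Delta^{p/2-1}$ and again reduces matters to $\mathbb{E}\int_{t_k}^{t}|g|^p\,{\rm d}s$, which is controlled exactly as for the drift by Remark \ref{linear growth} and Lemma \ref{bounded lemma}. The resulting bound is $\Delta^{p/2-1}\cdot\Delta=\Delta^{p/2}$, precisely the claimed rate, and it dominates the drift contribution once we note $\Delta^{p}\le\Delta^{p/2}$ for $\Delta\in(0,1]$ and $p\ge 2$.

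Combining the two pieces and observing that all constants are independent of $k$, $t$, and $\Delta$, I would obtain $\mathbb{E}|Z(t)-\bar{Z}(t)|^p\le C\Delta^{p/2}$ uniformly in $t$, and conclude by taking the supremum over $t\in[0,T]$. I do not anticipate a deep obstacle: this is a standard one-step regularity estimate for the scheme, and the essential point is that the \emph{diffusion} increment, rather than the drift, sets the $\Delta^{p/2}$ order. The only place requiring care is the bookkeeping of the two successive Hölder applications in the diffusion term, so that the powers of $\Delta$ combine to give exactly $p/2$ and not a smaller exponent.
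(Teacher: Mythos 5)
Your proposal is correct and follows essentially the same route as the paper: the same reduction of $Z(t)-\bar{Z}(t)$ to the drift and diffusion increments over $[t_k,t)$ via \eqref{continuous EM}, the same H\"older step for the drift, and the same moment estimate for the stochastic integral (the paper invokes Theorem 1.7.1 of Mao's book, which is precisely the Burkholder--Davis--Gundy-plus-H\"older bound you assemble by hand), finishing with Remark \ref{linear growth} and Lemma \ref{bounded lemma}. The only cosmetic difference is that the paper writes the left endpoint as $[t/\D]\D$ rather than $t_k$ and keeps both contributions in a single combined constant $C\D^{p/2-1}(\D^{p/2}+1)$.
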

\begin{proof}
 For any $t\in [0,T]$, according to \eqref{continuous EM} and the basic inequality $(\vert u\vert+\vert v\vert)^p\le 2^{p-1}(\vert u\vert^p+\vert v\vert^p)$, $p\ge 2$, one has 
\begin{equation*}
\begin{split}
\mathbb{E}\vert Z(t)-\bar{Z}(t)\vert^p\le& 2^{p-1}\mathbb{E}\left\vert \int_{[t/{\D}]\D}^t f(\bar{Z}(s),\alpha(s)){\rm d}s\right\vert^p
+2^{p-1}\mathbb{E}\left\vert \int_{[t/{\D}]\D}^t g(\bar{Z}(s),\alpha(s)){\rm d}B(s)\right\vert^p.
\end{split}
\end{equation*}
Applying H\"older's inequality and Theorem 1.7.1 in \cite{Maobook}, one can arrive at
\begin{equation*}
\begin{split}
\mathbb{E}\vert Z(t)-\bar{Z}(t)\vert^p\le& C\D^{p-1}\mathbb{E} \int_{[t/{\D}]\D}^t \left\vert f(\bar{Z}(s),\alpha(s))\right\vert^p{\rm d}s+C\D^{p/2-1}\mathbb{E} \int_{[t/{\D}]\D}^t \left\vert g(\bar{Z}(s),\alpha(s))\right\vert^p {\rm d}s.
\end{split}
\end{equation*}
On the basis of Remark \ref{linear growth} and Lemma \ref{bounded lemma}, one has
\begin{equation*}
\begin{split}
\mathbb{E}\vert Z(t)-\bar{Z}(t)\vert^p\le& C\D^{p/2-1}(\D^{p/2}+1)\mathbb{E} \int_{[t/{\D}]\D}^t \left(1+\vert \bar{Z}(s)\vert^p\right){\rm d}s\\
\le& C\D^{p/2-1}(\D^{p/2}+1) \int_{[t/{\D}]\D}^t \left(1+\mathbb{E}\sup_{0\le s\le t}\vert Z(s)\vert^p\right){\rm d}s\\
\le &C\D^{p/2},
\end{split}
\end{equation*}
since $t\in[0,T]$ is arbitrary, the proof is completed.
\end{proof}

%%%%%%%%%%%%%%%%%%%%%%%%%%%%%%%%%%%%%%%%%%%%%%%%%%%%%%%%%%%%%
 %%%%%%%%%%%%%%%%%%%%%%%%%%%%%%%%%%%%%%%%%%%%%%%%%%%%%%%%%%%%%

\begin{theorem}\label{Strong convergence}
Under Assumption \ref{The global Lipschitz condition}, for any $p\ge 2$, the EM method \eqref{continuous EM} has the property that
\begin{equation*}
\mathbb{E}\left(\sup_{0\le t\le T}\vert z(t)-Z(t)\vert^{p}\right)\le C\Delta^{p/2}.
\end{equation*}
\end{theorem}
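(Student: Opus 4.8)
The plan is to run a standard Euler--Maruyama error analysis, whose success here hinges entirely on the fact that both the exact solution \eqref{HSDE} and the continuous scheme \eqref{continuous EM} are driven by the \emph{same} Markov chain $\alpha(\cdot)$. First I would subtract the two integral representations to write the error as
\begin{equation*}
z(t)-Z(t)=\int_0^t\left[f(z(s),\alpha(s))-f(\bar{Z}(s),\alpha(s))\right]{\rm d}s+\int_0^t\left[g(z(s),\alpha(s))-g(\bar{Z}(s),\alpha(s))\right]{\rm d}B(s).
\end{equation*}
The decisive point is that the state argument of $f$ and $g$ is $\alpha(s)$ in \emph{both} differences, so Assumption \ref{The global Lipschitz condition} applies directly in the first variable, giving pointwise bounds of size $L|z(s)-\bar{Z}(s)|$ with \emph{no} residual Markov-chain approximation error. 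This is exactly the term that, in the classical schemes discussed in Section \ref{sec1}, degrades the order to $1/p$.

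Next I would raise to the $p$-th power, take the supremum over $[0,t]$, and use the inequality $(|u|+|v|)^p\le 2^{p-1}(|u|^p+|v|^p)$ to separate a drift contribution from a diffusion contribution. For the drift term I apply H\"older's inequality in time; for the diffusion term I apply the Burkholder--Davis--Gundy inequality followed by H\"older, exactly as in the proof of Lemma \ref{X_X_D}. After invoking the Lipschitz bound, both are controlled by a constant times $\int_0^t\mathbb{E}|z(s)-\bar{Z}(s)|^p{\rm d}s$.

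The remaining task is to replace $\bar{Z}$ by $Z$. I would write $|z(s)-\bar{Z}(s)|^p\le 2^{p-1}(|z(s)-Z(s)|^p+|Z(s)-\bar{Z}(s)|^p)$ and invoke Lemma \ref{X_X_D} to bound the second summand by $C\D^{p/2}$. Setting $h(t):=\mathbb{E}\sup_{0\le u\le t}|z(u)-Z(u)|^p$, this yields an inequality of the form $h(t)\le C\D^{p/2}+C\int_0^t h(s){\rm d}s$, and a final application of Gronwall's inequality gives $h(T)\le C\D^{p/2}$, which is the claim.

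I do not expect a genuine obstacle here: the moment bounds of Lemma \ref{bounded lemma} together with Lemma \ref{theorem_existence and uniqueness} guarantee that every quantity is finite, and the one-step estimate of Lemma \ref{X_X_D} already carries the optimal $\D^{p/2}$ rate. The only point needing care is to keep the supremum under the time integral so that Gronwall closes cleanly; the real conceptual work --- eliminating the chain-approximation error --- was already accomplished in the construction \eqref{EM}--\eqref{continuous EM} rather than in this estimate.
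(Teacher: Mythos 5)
Your proposal is correct, and it reaches the paper's conclusion by the same overall strategy: exploit that the exact solution and the scheme share the \emph{same} chain $\alpha(s)$ so that Assumption \ref{The global Lipschitz condition} applies with no chain-approximation residue, reduce everything to $\int_0^t\mathbb{E}|z(s)-\bar{Z}(s)|^p\,{\rm d}s$, split off the $C\Delta^{p/2}$ term via Lemma \ref{X_X_D}, and close with Gronwall. The one place you genuinely diverge is the opening decomposition. The paper first applies It\^o's formula to $|z(t)-Z(t)|^2$, obtaining three terms (drift inner product, quadratic-variation term, and a stochastic integral), then raises the identity to the power $p/2$ and estimates the resulting pieces $A_1,A_2,A_3$ with H\"older and Theorem 1.7.2 of Mao's book (a Burkholder--Davis--Gundy-type bound) applied at exponent $p/2$. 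You instead subtract the two integral representations directly, raise to the $p$-th power, and apply BDG at exponent $p$ to the stochastic integral, followed by H\"older in time. Your route is slightly more elementary --- no It\^o formula is needed, and you get two terms rather than three --- and is the standard textbook argument for EM convergence under global Lipschitz conditions; the paper's It\^o-based route is the one more commonly seen in the hybrid-systems literature and generalizes more readily to settings where one works with a Lyapunov-type function of the error rather than its plain $p$-th power. Both require $p\ge 2$, both invoke Lemma \ref{X_X_D} and Gronwall identically, and both are complete; your remark that finiteness of $h(t)$ (needed for Gronwall) follows from Lemmas \ref{theorem_existence and uniqueness} and \ref{bounded lemma} is a point the paper leaves implicit.
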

\begin{proof}
Recall \eqref{HSDE} and \eqref{continuous EM}, for any $t\in [0,T]$, according to It\^o's formula, we have
\begin{equation*}
\begin{split}
|z(t)-Z(t)|^2=&\int_0^t 2(z(s)-Z(s))^{\rm T}\left(f(z(s),\alpha(s))-f(\bar{Z}(s),\alpha(s))\right){\rm d}s\\
&+\int_0^t \left|g(z(s),\alpha(s))-g(\bar{Z}(s),\alpha(s))\right|^2{\rm d}s\\
&+\int_0^t 2(z(s)-Z(s))^{\rm T}\left(g(z(s),\alpha(s))-g(\bar{Z}(s),\alpha(s))\right){\rm d}B(s).
\end{split}
\end{equation*}
Then for any $T_1\in [0,T]$, it is easily to get that
\begin{equation}\label{A*}
\begin{split}
&\mathbb{E}\left(\sup_{0\le t\le T_1}\vert z(t)-Z(t)\vert^{p}\right)\\
\le&3^{p/2}\underbrace{\mathbb{E}\left\{\sup_{0\le t\le T_1}\left(\int_0^t 2(z(s)-Z(s))^{\rm T}\left(f(z(s),\alpha(s))-f(\bar{Z}(s),\alpha(s))\right){\rm d}s\right)^{p/2}\right\}}_{A_1}\\
&+3^{p/2}\underbrace{\mathbb{E}\left(\sup_{0\le t\le T_1}\left(\int_0^t \left|g(z(s),\alpha(s))-g(\bar{Z}(s),\alpha(s))\right|^2{\rm d}s\right)^{p/2}\right\}}_{A_2}\\
&+3^{p/2}\underbrace{\mathbb{E}\left(\sup_{0\le t\le T_1}\left(\int_0^t 2(z(s)-Z(s))^{\rm T}\left(g(z(s),\alpha(s))-g(\bar{Z}(s),\alpha(s))\right){\rm d}B(s)\right)^{p/2}\right\}}_{A_3}.
\end{split}
\end{equation}
Using H\"older's inequality and Assumption \ref{The global Lipschitz condition}, one can deduce that
\begin{equation}\label{A_1}
\begin{split}
A_1\le& 2^{p/2}T_1^{p/2-1}\mathbb{E}\int_0^{T_1} |z(s)-Z(s)|^{p/2}\left|f(z(s),\alpha(s))-f(\bar{Z}(s),\alpha(s))\right|^{p/2}{\rm d}s\\
\le& 2^{p/2}T_1^{p/2-1}L^{p/2}\mathbb{E}\int_0^{T_1} |z(s)-Z(s)|^{p/2}\left|z(s)-\bar{Z}(s)\right|^{p/2}{\rm d}s\\
\le& C\mathbb{E}\int_0^{T_1} |z(s)-Z(s)|^{p}{\rm d}s+ C\mathbb{E}\int_0^{T_1}\left|Z(s)-\bar{Z}(s)\right|^{p}{\rm d}s,\\
\end{split}
\end{equation}
and
\begin{equation}\label{A_2}
\begin{split}
A_2\le& T_1^{p/2-1}\mathbb{E}\int_0^{T_1}\left|g(z(s),\alpha(s))-g(\bar{Z}(s),\alpha(s))\right|^{p}{\rm d}s\\
\le& T_1^{p/2-1}L^{p}\mathbb{E}\int_0^{T_1}\left|z(s)-\bar{Z}(s)\right|^{p}{\rm d}s\\
\le& C\mathbb{E}\int_0^{T_1} |z(s)-Z(s)|^{p}{\rm d}s+ C\mathbb{E}\int_0^{T_1}\left|Z(s)-\bar{Z}(s)\right|^{p}{\rm d}s.\\
\end{split}
\end{equation}
Applying Theorem 1.7.2 in \cite{Maobook}, together with Assumption \ref{The global Lipschitz condition}, we can obtain that
\begin{equation}\label{A_3}
\begin{split}
A_3\le& C\mathbb{E}\int_0^{T_1} |z(s)-Z(s)|^{p/2}\left|g(z(s),\alpha(s))-g(\bar{Z}(s),\alpha(s))\right|^{p/2}{\rm d}s\\
\le& C\mathbb{E}\int_0^{T_1} |z(s)-Z(s)|^{p}{\rm d}s+ C\mathbb{E}\int_0^{T_1}\left|Z(s)-\bar{Z}(s)\right|^{p}{\rm d}s.\\
\end{split}
\end{equation}
Substituting \eqref{A_1}-\eqref{A_3} into \eqref{A*}, yields
\begin{equation*}
\begin{split}
\mathbb{E}\left(\sup_{0\le t\le T_1}\vert z(t)-Z(t)\vert^{p}\right)\le& C\mathbb{E}\int_0^{T_1} |z(t)-Z(t)|^{p}{\rm d}t+ C\mathbb{E}\int_0^{T_1}\left|Z(t)-\bar{Z}(t)\right|^{p}{\rm d}t.\\
\end{split}
\end{equation*}
By Lemma \ref{X_X_D} one can further show that
\begin{equation*}
\begin{split}
\mathbb{E}\left(\sup_{0\le t\le T_1}\vert z(t)-Z(t)\vert^{p}\right)\le& C\int_0^{T_1} \mathbb{E}\left(\sup_{0\le s\le t}|z(s)-Z(s)|^{p}\right){\rm d}t+ C\D^{p/2}.\\
\end{split}
\end{equation*}
It therefore follows from the Gronwall inequality that
\begin{equation*}
\mathbb{E}\left(\sup_{0\le t\le T_1}\vert z(t)-Z(t)\vert^{p}\right)\le  C{\rm e}^{CT_1}\D^{p/2},
\end{equation*}
since $T_1\in[0,T]$ is arbitrary, hence
\begin{equation*}
\mathbb{E}\left(\sup_{0\le t\le T}\vert z(t)-Z(t)\vert^{p}\right)\le  C\D^{p/2}.
\end{equation*}
The proof is completed.
\end{proof}

%%%%%%%%%%%%%%%%%%%%%%%%%%%%%%%%%%%%%%%%%%%%%%%%%%%%%%%%%%%%
\section{Conclusions}\label{Conclusions}

In this paper, we develop the EM scheme, which is different from the one given in the references (such as \cite{YUAN2004223,MAO2007936}), to generate the approximate solutions to a class of SDEwMSs, and analyze the order of the errors in the $L^p$-sense. It has been proved that the $L^p$-convergence rate of the EM scheme given in this paper for SDEwMSs can reach $1/2$. We further point out that the techniques used in this paper to construct the EM method can also be used to construct other schemes for hybrid systems, such as stochastic theta method, tamed EM method, and Milstein method, etc. We believe that this approach will also contribute to the $L ^p$-convergence order of these numerical methods for hybrid systems.

\bibliography{mybibfile}

\end{document}